\newtheorem{theorem}{Theorem}[section]
\theoremstyle{definition}
\newtheorem{definition}[theorem]{Definition}
\newtheorem{example}[theorem]{Example}
\theoremstyle{remark}
\newtheorem{remark}[theorem]{Remark}
\newcommand{\te}{\theta}
\date{ }
\author{Fatmanur Gürsoy, Elif Segah Oztas and Irfan Siap\\
	Department of Mathematics,\\ Yildiz Technical University, Istanbul, TURKEY\\
	fatmanur@yildiz.edu.tr, elifsegahoztas@gmail.com, irfan.siap@gmail.com}
\title{Reversible DNA codes over $F_{16}+uF_{16}+vF_{16}+uvF_{16}$}
\begin{document}
	\maketitle
	\begin{abstract}
		In this paper we study the structure of specific linear codes called DNA codes. The first attempts on studying such codes have been proposed over four element rings which are naturally matched with DNA four letters. Later, double (pair) DNA strings or in general $k$-DNA strings called $k$-mers have been matched with some special rings and codes over such rings with specific properties are studied. However, these matchings in general are not straightforward and because of the fact that the reverse of the codewords ($k$-mers) need to exist in the code, the matching problem is difficult and it is referred to as the reversibility problem. Here, $8$-mers (DNA 8-bases) are matched with the ring elements of  $R_{16}=F_{16}+uF_{16}+vF_{16}+uvF_{16}.$  Furthermore, cyclic codes over the ring $R_{16}$ where the multiplication is taken to be noncommutative with respect to the automorphism $\te$ are studied. The preference on the skewness is shown to be very useful and practical especially since this serves as a direct solution to the reversibility problem.	
	\end{abstract}
	\section{Introduction}
	
	The interest on DNA computing is initiated  by Leonard Adleman \cite{adleman94}. Adleman solved the famous salesman problem (an NP-hard problem) in a test tube with DNA strings. Adleman considered  and made use of the well-known Watson-Crick complement (WCC) which is a relation between a DNA string and its reversible complement.
	
	DNA sequences consist of four bases (nucleotides) that are (A) Adenine, (G) Guanine, (T) Thymine and (C) Cytosine and referred as DNA letters. DNA has two strands and they are related  by the rule called Watson-Crick complement (WCC) which says that a DNA string is attached to its reversible complement string and forms a helix. Briefly the WCC of A is T and vice versa and the WCC of G is C and vice versa.

	In \cite{tehergf4}, reversible complement DNA codes were generated with DNA single bases by using  additive codes over $F_4$ which presents a matching between the four elements of the field $F_4$ and four DNA letters. Later, the first attempt to match multiple DNA letters (strings) with $F_{16}$ has been presented in \cite{ise}, where  the elements of $F_{16}$ are matched with DNA double bases. In \cite{ise}, the reversibility problem has been resolved  by introducing a $4$-power table that is a map between DNA double basis and elements of $F_{16}$. Also in \cite{agul,ise,yildizsiap},  DNA double bases are further considered. The motivation for considering strings of DNA's relies on the fact that there are identified strings (proteins) that play important role in DNA. And further there are currently many studies trying to identify the role of specific $k$-bases ($k$-mers) especially studies that focus on binding sides of DNA and trying to determine the most frequent $8$-mers \cite{alper}.
	
	Firstly, the ring on  which linear codes are defined in this paper is  a $16^4$ element commutative non chain ring
	$$ R_{16}=F_{16}[u,v]/\langle u^2-u, v^2-v\rangle=F_{16}+uF_{16}+vF_{16}+uvF_{16}$$
	
	where $F_{16}$ is a $16$-element field.
	
	Clearly, $ R_{16}=F_{16}+uF_{16}+vF_{16}+uvF_{16}=\{a+ub+vc+uvd |a,b,c,d\in F_{16}, u^2=u, \ v^2=v\}.$
	
	In order to clarify the reversibility problem we present a concrete  example.  Let $(\alpha_1,\alpha_2,\alpha_3)$ be a codeword corresponding to AATTGGCCTTTT (a $12$-string)
	where $\alpha_1 \rightarrow $AATT, $\alpha_2 \rightarrow $GGCC, $\alpha_3 \rightarrow  $TTTT and $\alpha_1 ,\alpha_2 ,\alpha_3 \in R_{16}.$  The reverse of $(\alpha_1,\alpha_2,\alpha_3)$ is $(\alpha_3,\alpha_2,\alpha_1)$, and $(\alpha_3,\alpha_2,\alpha_1)$
	corresponds to TTTTGGCCAATT. However, TTTTGGCCAATT is not the reverse of AATTGGCCTTTT. Indeed, the reverse of AATTGGCCTTTT is TTTTCCGGTTAA.

	Note that, in this study, an element of $F_{16}+uF_{16}+vF_{16}+uvF_{16}$ corresponds to a DNA 8-bases. For example $\alpha+u+\alpha^3v+\alpha^2 uv$ corresponds to GACATAAT.
	
	Some studies on multiple DNA letter matchings which naturally leads to the reversibility problem can be found in \cite{tehergf4,agul,ise,siaptaherr,yildizsiap}, where the  researchers have aimed  to solve the reversibility problem on DNA codes and generated reversible and reversible complement DNA codes.
	
	In this study, first we explore skew cyclic codes over $F_{16}+uF_{16}+vF_{16}+uvF_{16}$. Then, we present a solution to the reversibility problem for DNA 8-bases and obtain reversible DNA codes.   We accomplish this task by considering skew cyclic codes over a skew polynomial ring  defined via an automorphism and by choosing  special factors of $x^n-1.$

	\section{Preliminaries and definitions}
	
	In this section we  give some basic properties and definitions of skew cyclic codes and reversible DNA codes.
	\begin{definition}\cite{Jacobson}
		Let $R$ be a commutative ring with identity and $\theta$ be an automorphism over $R$. The set of polynomials $R[x;\theta]=\{a_0+a_1x+...+a_{n-1}x^{n-1}|a_i\in R, n\in \mathbb{N}\}$ is called the skew polynomial ring over $R$ where addition is the usual addition of polynomials and the multiplication is defined by $xa=\theta(a)x$ ($a\in R$) and extended to polynomial multiplication naturally.
	\end{definition}
	Skew cyclic codes were originally introduced by Boucher et al. in \cite{skew cyclic} by using skew polynomial rings over the finite field $F_q$. A skew cyclic code is defined to be a linear code (an $R$-submodule of $R^n$) $C$ of length $n$ over $R$ which further satisfies the property that $(\te(c_{n-1}),\te(c_0),...,\te(c_{n-2}))\in C$, for all $(c_0,c_1,...,c_{n-1})\in C$. In polynomial representation, a skew cyclic code $C$ of length $n$ over $F_q$ corresponds to a left ideal of the quotient ring $F_q[x;\theta]/(x^n-1)$, if the order of $\theta$, say $m$, divides $n$ \cite{skew cyclic}. If $m$ does not divide $n$ then $F_q[x;\theta]/(x^n-1)$ is not a ring anymore. In this case the skew cyclic code $C$ can be  considered as left $F_q[x;\te]$-submodule of  $F_q[x;\te]/(x^n-1)$ \cite{I.siap}. In both cases $C$ is generated by a monic polynomial $g(x)$ which is a right divisor of $x^n-1$ in $F_q[x;\te]$ and denoted by $C=( g(x))$.
	Recently, definition of skew cyclic codes over finite fields has been extended to rings. For instance;  skew cyclic codes are defined over finite chain rings in \cite{jitman}, over Galois rings in \cite{galois}, over $F_q+vF_q$ in \cite{fq+vfq}.
	
	
	\begin{definition}
		Let $C$ be a code of length $n$ over $F_q$. If $c^r=(c_{n-1},c_{n-2},\ldots,c_1,c_0)\in C$ for all $c=(c_0,c_1,\ldots,c_{n-1})\in C$, then $C$ is called a reversible code.
	\end{definition}

	\section{Reversible DNA codes over $R_{16}$}

	In this study, we  define skew cyclic codes over the ring $R_{16}=F_{16}+uF_{16}+vF_{16}+uvF_{16}$. $R_{16}$ is a commutative non-chain ring where $u^2=u$, $v^2=v$ and $uv=vu$. By Chinese remainder theorem we can decompose $R_{16}$ as follows:
	$R_{16}=uvF_{16}\oplus(v+uv)F_{16}\oplus(u+uv)F_{16}\oplus(1+u+v+uv)F_{16}$. We define a Gray map;
	\begin{equation}
	\begin{split}
	\phi:R_{16} &\rightarrow F_{16}^4  \\
	a+ub+vc+uvd &\rightarrow (a+b+c+d, \ a+c, \ a+b, \ a).
	\end{split}
	\end{equation}
	In \cite{yao}, skew cyclic codes over the ring  $R_q$, where $q=p^m$ and $p$ is an odd prime, are defined with respect to the automorphism $\te'(a+ub+vc+uvd)=a^p+uc^p+vb^p+uvd^p$.
	Here we study skew cyclic codes with  $q=16$ and introduce a new automorphism over $R_{16}$;
	
	\begin{equation}\label{auto}
	\begin{split}
	\te:R_{16} &\rightarrow R_{16}  \\
	a+ub+vc+uvd &\rightarrow a^4+(1+u)b^4+(1+v)c^4+(1+u)(1+v)d^4\\
	&=(a+b+c+d)^4+u(b+d)^4+v(c+d)^4+uvd^4.
	\end{split}
	\end{equation}
	Naturally, $R_{16}[x;\te]$ is a skew polynomial ring and a skew cyclic code $C$ of length $n$ over $R_{16}$ corresponds to a left $R_{16}$-submodule of $R_{16}[x;\te]/(x^n-1)$. Because of the complexity of classifying all left submodules of $R_{16}[x;\te]/(x^n-1)$ we   make use of the Gray image of $C$ by defining the following subspaces over $F_{16}$.
	\begin{align*}
	C_1&=\{\textbf{a}+\textbf{b}+\textbf{c}+\textbf{d}\in F_{16}^n\rvert \textbf{a}+u\textbf{b}+ v \textbf{c} +uv \textbf{d} \in C  \} \\
	C_2&=\{\textbf{a}+\textbf{c}\in F_{16}^n\rvert \textbf{a}+u\textbf{b}+ v \textbf{c} +uv \textbf{d} \in C, \text{ for some }  \textbf{b},\textbf{d} \in F_{16}^n \}\\
	C_3&=\{\textbf{a}+\textbf{b}\in F_{16}^n\rvert \textbf{a}+u\textbf{b}+ v \textbf{c} +uv \textbf{d} \in C, \text{ for some }  \textbf{c},\textbf{d} \in F_{16}^n \}\\
	C_4&=\{\textbf{a}\in F_{16}^n\rvert \textbf{a}+u\textbf{b}+ v \textbf{c} +uv \textbf{d} \in C, \text{ for some }  \textbf{b},\textbf{c},\textbf{d} \in F_{16}^n \}
	\end{align*}
	
	It can be easily seen that $C=uvC_1\oplus(v+uv)C_2\oplus(u+uv)C_3\oplus(1+u+v+uv)C_4$ and each $C_i$ is a linear code of length $n$ over $F_{16}$ for $i\in \{1,2,3,4\}$.
	
	In \cite{ise}, Table 1 gives a mapping $\tau$ between  elements of $F_{16}$ and DNA pairs in such a way that each element of $F_{16}$ and its $4$th power are mapped to DNA pairs, which are reverses of each other.  For example, in Table 1 $\tau(\alpha)=$GC while $\tau(\alpha^4)=$CG.  This map can be naturally extended to a map $\tau_2$ from $F_{16}^4$ to DNA 8-bases as follows;   $\tau_2(a,b,c,d)=(\tau(a),\tau(b),\tau(c),\tau(d))$ where $a,b,c,d\in F_{16}$.
	Throughout the paper we will use $\alpha$ as in Table 1.
	
	To make the connection between skew cyclic codes over $R_{16}$ and DNA codes we  define a map   $\varphi=\tau_2 \circ \phi$ and so for any $a+ub+vc+uvd\in R_{16}$; $\varphi(a+ub+vc+uvd)=\tau_2(\phi(a+ub+vc+uvd))=(\tau(a+b+c+d), \ \tau(a+c), \ \tau(a+b), \ \tau(a))$. In this way for any $\beta=a+ub+vc+uvd\in R_{16}$;  $\varphi(\beta)$ and $\varphi(\te(\beta))$ are DNA reverses of each other, since $\varphi(\te(\beta))=\tau_2(a^4,(a+b)^4,(a+c)^4,(a+b+c+d)^4)$. This map can naturally be extended to $n$-tuples coordinatewise. For any $c=(c_0, c_1, \ldots, c_{n-1})\in R_{16}^n $ we have $c'=(\te (c_{n-1}),  \ldots, \te (c_1), \te (c_0))\in R_{16}^n $,  that is, $\varphi(c)^r=\varphi(c')$, or equivalently, $(\varphi(c_0), \varphi(c_1), \ldots, \varphi( c_{n-1}))^r=(\varphi(\te (c_{n-1})), \ldots, \varphi(\te(c_1)),  \varphi( \te(c_{0})))$. 
	
	\begin{table}
		\caption{ The $\tau$ mapping between DNA pairs and $F_{16}$ \cite{ise}}\label{dna table}
		\begin{tabular}{lcl}
			\hline
			$F_{16}$(multiplicative)&  $F_{16}$(additive)&Double DNA pair \\
			\hline\hline
			0&0&AA\\
			$\alpha^0$ &1&TT \\
			$\alpha^1$ &$\alpha$&AT\\
			$\alpha^2$ & $\alpha^2$&GC\\
			$\alpha^3$ & $\alpha^3$&AG \\
			$\alpha^4$ & $1+\alpha$&TA  \\
			$\alpha^5$ & $ \alpha+\alpha^2$ &CC \\
			$\alpha^6$ & $\alpha^2 +\alpha^3$&AC \\
			$\alpha^7$ & $1+\alpha +\alpha^3$&GT  \\
			$\alpha^8$ & $1 +\alpha^2$&CG \\
			$\alpha^9$ & $\alpha +\alpha^3$&CA \\
			$\alpha^{10}$ & $1+\alpha +\alpha^2$&GG \\
			$\alpha^{11}$ & $\alpha +\alpha^2+\alpha^3$&CT \\
			$\alpha^{12}$ & $1+\alpha +\alpha^2+\alpha^3$&GA \\
			$\alpha^{13}$ & $1 +\alpha^2+\alpha^3$&TG \\
			$\alpha^{14}$ & $1+\alpha^3$&TC \\
		\end{tabular}
	\end{table}
	\begin{example}
		Let $\beta=\alpha+u+\alpha^3v+\alpha^2 uv\in R_{16}$. Then,  $\varphi(\beta)=(\tau(\alpha^{12}), \tau(\alpha^9), \tau(\alpha^4),$ $ \tau(\alpha)) = $(GA,CA,TA,AT). And $\varphi	(\te(\beta))=\tau_2(\alpha^4, \alpha, \alpha^6, \alpha^3)=$ (TA,AT,AC,AG)
		Therefore,  $\varphi(\beta)^r=\varphi(\te(\beta))$.
	\end{example}
	
	\begin{definition}
		Let $C\subseteq R_{16}^n$. If $\varphi(c)^r\in \varphi(C)$  for all $c\in C$, then $C$ or equivalently $\varphi(C)$ is called a reversible DNA code.
	\end{definition}

	\begin{definition}\cite{alger}
		Let $f(x)=a_0+a_1x+\ldots +a_tx^t$ be a polynomial of degree $t$ over  $ R_{16}$.  $f(x)$ is said to be a palindromic polynomial if $a_i=a_{t-i}$  for all $i\in  \{0,1,\ldots,t \}$.
		And $f(x)$ is said to be a $\theta$-palindromic polynomial if $a_i=\theta(a_{t-i})$  for all $i\in \{0,1,\ldots,t\}$.
	\end{definition}

	Let $C$ be a skew cyclic code of length $n$ over $F_q$ with respect to an automorphism $\te'$. If the order of $\te'$ and $n$ are relatively prime then $C$ is a cyclic code over $F_q$ \cite{I.siap}. Similarly, any skew cyclic code of odd length over $R_{16}$ with respect to  $\te$ is a cyclic code, since the order of $\te$ is $2$. For this reason we restrict the length $n$ to even numbers only.
	
	In \cite{ise}, reversible DNA codes were obtained by indirect methods such as using lifted polynomials. However, here by considering skew cyclic codes with special generators
	we are able to construct  reversible DNA codes directly. The skewness property not only provides a richer source for DNA codes but also proves to be more practical.
	
	\begin{theorem}\label{teo1}
		Let $C=( g(x))$ be a skew cyclic code of length $n$  over $R_{16}$ where  $g(x)$ is a right divisor of $x^n-1$ in $R_{16}[x;\theta]$ and $deg(g(x))$ is odd. If  $g(x)$ is a $\theta$-palindromic polynomial then $\varphi(C)$ is a reversible DNA code.
	\end{theorem}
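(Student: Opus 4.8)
The plan is to reduce the DNA-reversibility of $\varphi(C)$ to an internal closure property of $C$, and then verify that property on a generating set. The excerpt already establishes that $\varphi(c)^{r}=\varphi(c')$ where $c'=(\theta(c_{n-1}),\ldots,\theta(c_{1}),\theta(c_{0}))$; since $\varphi$ is built from the bijections $\tau$ and $\phi$ it is injective, so $\varphi(c)^{r}\in\varphi(C)$ holds for every $c\in C$ if and only if the ``skew reverse'' $c'$ lies in $C$ for every $c\in C$. Thus the whole theorem becomes the assertion that $C$ is closed under the map $\mu\colon c\mapsto c'$. I would first record two structural facts. First, because $n$ is even and the order of $\theta$ is $2$, we have $\theta^{n}=\mathrm{id}$, so $x^{n}$ is central and $R_{16}[x;\theta]/(x^{n}-1)$ is a genuine ring; consequently $C=(g(x))$ is the left $R_{16}$-span of the skew shifts $\{x^{i}g : 0\le i\le n-t-1\}$, where $t=\deg g$. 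Second, $\mu$ is additive and $\theta$-semilinear, that is $\mu(\lambda c)=\theta(\lambda)\,\mu(c)$ for $\lambda\in R_{16}$. Because $C$ is an $R_{16}$-module, these two facts reduce the problem to checking that $(x^{i}g)'\in C$ for each $i$ in the above range.

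The core is a polynomial identity for these shifts. Writing $g(x)=\sum_{i=0}^{t}a_{i}x^{i}$, the $\theta$-palindromic hypothesis $a_{i}=\theta(a_{t-i})$ combined with $\theta^{2}=\mathrm{id}$ gives the symmetric form $\theta(a_{i})=a_{t-i}$. I would then compute $(x^{j}g)'$ coordinatewise: since $x^{j}g=\sum_{i}\theta^{j}(a_{i})x^{\,j+i}$, the coefficient at position $k$ of $(x^{j}g)'$ is $\theta^{\,j+1}(a_{\,n-1-k-j})$, and after re-indexing and applying the palindromic swap this matches the coefficients of a single left multiple of $g$. Concretely, the claim to prove is
\[
(x^{j}g)' \;=\; x^{\,n-1-j-t}\,g(x),
\]
so that $(x^{j}g)'$ is again a left multiple of $g$ and hence a codeword; note that $\deg\bigl(x^{\,n-1-j-t}g\bigr)=n-1-j<n$, so no reduction modulo $x^{n}-1$ intervenes. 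Summing these with the semilinearity of $\mu$ then yields $\mu(C)\subseteq C$, which is exactly the desired reversibility.

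The main obstacle, and the place where every hypothesis is spent, is the displayed identity. The skew reverse intrinsically reverses the order of the coefficients, so a priori $(x^{j}g)'$ looks like a \emph{reversed} polynomial, equivalently a right multiple of $g$, whereas $C$ is a \emph{left} ideal; the difficulty is to collapse this into a left multiple. Two ingredients must cooperate: the $\theta$-palindromic relation converts the reversed index $a_{\,n-1-k-j}$ into an $a_{t-(\cdot)}$ term, and the parity constraints---$n$ even together with $\deg g=t$ odd, so that $n-1-t$ is even---force the extra powers of $\theta$ introduced by the noncommutative multiplication to cancel. This is also precisely what makes the ``right'' and ``left'' forms $g\,x^{\,n-1-t}$ and $x^{\,n-1-t}g$ coincide. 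I would therefore track the parity of the $\theta$-exponents with care, since a wrong parity would break the matching; everything else is bookkeeping that follows from the module structure set up in the first step.
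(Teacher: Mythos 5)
Your proposal is correct and follows essentially the same route as the paper: the paper's proof rests entirely on the asserted identity $\varphi\bigl(\sum_{i}\beta_i x^i g(x)\bigr)^r=\varphi\bigl(\sum_{i}\theta(\beta_i)x^{k-1-i}g(x)\bigr)$ with $k=n-\deg g$, and your term-by-term version $(x^{j}g)'=x^{\,n-1-j-t}g(x)$ together with the $\theta$-semilinearity of the skew-reverse map is exactly that statement. The only difference is that you actually sketch the verification of the identity (correctly isolating where the parity hypotheses, i.e. $n$ even and $t$ odd so that $n-1-t$ is even, are used to cancel the $\theta$-exponents), whereas the paper states it without proof.
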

	
	\begin{proof}
		Let $g(x)$ be a $\theta$-palindromic polynomial. Recall that $\varphi$ gives the correspondence of codewords in DNA form. Reverses of each DNA codeword $\varphi(c)$, for $c\in C$, are obtained by the following equation:
		\begin{equation}
		\varphi \left(  \sum_{i=0} ^{k-1}\beta_i x^i g(x)\right)^r =  \varphi  \left(  \sum_{i=0} ^{k-1} \theta(\beta_i) x^{k-1-i}g(x) \right)
		\end{equation}
		where $k=n-deg(g(x))$ and $\beta_i  \in R_{16}$ and note that, we do not distinguish between the vector representation and the polynomial representation of a codeword in $R_{16}^n$. Since $ \sum_i \theta(\beta_i) x^{k-1-i}g(x)\in C$, then $\varphi(C)$ is a  reversible DNA code. 
	\end{proof}
	\begin{example}
		$x^6-1=h(x)g(x)=(1+(\alpha^7+\alpha(u+v))x+(\alpha^7+\alpha(u+v))x^2+x^3) (1+(\alpha^7+\alpha(u+v))x+(\alpha^{13}+\alpha^4(u+v))x^2+x^3)$ in $R_{16}[x;\te]$.  Then $C=(g(x))$ is a skew cyclic code over $R_{16}$ with the parameters [6,3,4].
		Since the degree of $g(x)$ is odd and it is a $\theta$-palindromic polynomial thus  $\varphi(C)$ is a reversible DNA code.
		
	\end{example}
	\begin{theorem}\label{teo2}
		Let $C=( g(x))$ be a skew cyclic code of length $n$  over $R_{16}$ where  $g(x)$ is a right divisor of $x^n-1$ in $R_{16}[x;\theta]$ and $deg(g(x))$ is even. If  $g(x)$ is a palindromic polynomial then $\varphi(C)$ is a reversible DNA code.
	\end{theorem}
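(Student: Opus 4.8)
The plan is to reprove the statement by the same mechanism used for Theorem~\ref{teo1}: I will show that the reversal-with-automorphism operation maps the code $C$ into itself, so that the reverse of every DNA codeword again lies in $\varphi(C)$. Recall from the discussion preceding Theorem~\ref{teo1} that for $c=(c_0,\ldots,c_{n-1})\in C$ one has $\varphi(c)^r=\varphi(c')$, where $c'=(\theta(c_{n-1}),\ldots,\theta(c_1),\theta(c_0))$. Hence it suffices to prove $c'\in C$ for every $c\in C$, i.e. to establish the identity
\begin{equation}
\varphi\!\left(\sum_{i=0}^{k-1}\beta_i x^i g(x)\right)^{\!r}=\varphi\!\left(\sum_{i=0}^{k-1}\theta(\beta_i)\,x^{k-1-i}g(x)\right),\qquad k=n-\deg(g(x)),
\end{equation}
exactly as in Theorem~\ref{teo1}, but now invoking the plain palindromic hypothesis together with the evenness of $\deg(g(x))$.

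First I would observe that the map $c\mapsto c'$ is $\theta$-semilinear: $(c_1+c_2)'=c_1'+c_2'$ and $(\beta c)'=\theta(\beta)c'$ for $\beta\in R_{16}$, both being immediate from its coordinatewise definition and the fact that $\theta$ is a ring automorphism. It therefore suffices to compute $\left(x^i g(x)\right)'$ for a single basis codeword, $0\le i\le k-1$, the general case following by expanding $c=\sum_i\beta_i x^i g(x)$ and extracting the scalars $\theta(\beta_i)$.

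Next I would compute coordinates directly. Writing $g(x)=\sum_{j=0}^{t}a_j x^j$ with $t=\deg(g(x))$ and using $x^i a=\theta^i(a)x^i$, the coefficient of $x^{i+j}$ in $x^i g(x)$ is $\theta^i(a_j)$. Applying the reversal $c'_\ell=\theta(c_{n-1-\ell})$ introduces one further power of $\theta$, producing coefficients of the form $\theta^{i+1}(a_{j})$; the palindromic hypothesis $a_j=a_{t-j}$ is exactly what is needed to re-express these as the coefficients of a single shifted generator $x^s g(x)$, forcing $s=n-1-i-t=k-1-i$. The desired identity $\left(x^i g(x)\right)'=x^{k-1-i}g(x)$ then holds provided $\theta^{s}=\theta^{i+1}$.

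The step I expect to be the crux is this matching of automorphism powers. Since $\theta^2=\mathrm{id}$, the requirement $\theta^{s}=\theta^{i+1}$ reduces to $s\equiv i+1\pmod 2$, which after substituting $s=n-1-i-t$ simplifies to the single congruence $n\equiv t\pmod 2$. This is precisely where the parity assumption is spent: with $\deg(g(x))=t$ even and $n$ even (recall we have restricted to even lengths, since for odd $n$ such a code is merely cyclic), the congruence holds, so $\left(x^i g(x)\right)'=x^{k-1-i}g(x)\in C$. By the semilinearity above, $c'=\sum_i\theta(\beta_i)x^{k-1-i}g(x)\in C$ for every $c\in C$, whence $\varphi(c)^r=\varphi(c')\in\varphi(C)$ and $\varphi(C)$ is a reversible DNA code. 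This also clarifies the contrast with Theorem~\ref{teo1}: the reversal always contributes one extra application of $\theta$, so an odd-degree generator must absorb it through the $\theta$-palindromic condition, whereas for even degree that extra $\theta$ is neutralized by the parity of the shift and the ordinary palindromic condition is enough.
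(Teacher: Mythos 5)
Your proof is correct and follows essentially the same route as the paper: both reduce reversibility to the identity $\varphi\bigl(\sum_{i}\beta_i x^i g(x)\bigr)^r=\varphi\bigl(\sum_{i}\theta(\beta_i)x^{k-1-i}g(x)\bigr)$ together with the observation that the right-hand side again lies in $C$. The paper merely asserts this identity, whereas you verify it coefficient-by-coefficient and isolate exactly where the evenness of $n$ and of $\deg(g)$ and the palindromic condition are spent (the congruence $n\equiv\deg(g)\pmod 2$ needed to match the powers of $\theta$); this is a correct elaboration of the same argument, not a different method.
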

	\begin{proof}
		Let $g(x)$ be a palindromic polynomial. Recall that $\varphi$ gives the correspondence of codewords in DNA form. Reverses of each DNA codeword $\varphi(c)$, for $c\in C$, are obtained by the following equation:
		\begin{equation}
		\varphi \left(  \sum_{i=0} ^{k-1}\beta_i x^i g(x)\right)^r =  \varphi  \left(  \sum_{i=0} ^{k-1} \theta(\beta_i) x^{k-1-i}g(x) \right)
		\end{equation}
		where $k=n-deg(g(x))$ and $\beta_i \in R_{16}$ and note that, we do not distinguish between the vector representation and the polynomial representation of a codeword in $R_{16}^n$. Since $ \sum_i \theta(\beta_i) x^{k-1-i}g(x)\in C$, then $\varphi(C)$ is a  reversible DNA code.
	\end{proof}
	\begin{example}
		$x^6-1=h(x)g(x)=(1+(\alpha^3+\alpha^2(u+v))x+x^2)(1+(\alpha^3+\alpha^2(u+v))x+(\alpha^3+\alpha^2(u+v))x^3+x^4)$ in $R_{16}[x;\te]$. Then $C=(g(x))$ is a skew cyclic code over $R_{16}$ with the parameters [6,4,4].
		Since the degree of $g(x)$ is even and it is a palindromic polynomial thus  $\varphi(C)$ is a reversible DNA code.
	\end{example}
	\begin{remark}
		To illustrate the proof of Theorem \ref{teo1} and Theorem \ref{teo2}, we let $C=( g(x))$ be a skew cyclic code of length $8$  over $R_{16}$. Suppose that $g(x)$ is a $\te$-palindromic polynomial of degree $5$, then $g(x)=g_0+g_1 x+g_2x^2+\te(g_2)x^3+\te(g_1)x^4+\te(g_0)x^5$, for some $g_0,g_1,g_2\in R_{16}$. In vector representation $g(x)$ corresponds to the codeword $c=(g_0,g_1,g_2,\te(g_2),\te(g_1),\te(g_0),0,0)\in C.$ Now,
		\begin{align*}
		\varphi(c)^r&= (\varphi(g_0),\varphi(g_1),\varphi(g_2),\varphi(\te(g_2)),\varphi(\te(g_1)),\varphi(\te(g_0)),\varphi(0),\varphi(0))^r\\
		&= (\varphi(0),\varphi(0),\varphi(g_0),\varphi(g_1),\varphi(g_2),\varphi(\te(g_2)),\varphi(\te(g_1)),\varphi(\te(g_0)))\\
		&=\varphi(c'),
		\end{align*}
		where $c'= (0,0,g_0,g_1,g_2,\te(g_2),\te(g_1),\te(g_0))$. Since $c'$ is the vector representation of the polynomial $x^2g(x)$ we have $c'\in C$. Hence $g(x)$ and $x^2g(x)$ correspond to the DNA codewords that are DNA reverses of each other. Similarly, for any $\beta\in R_{16}$, corresponding DNA codewords of the polynomials  $\beta g(x)$ and $\te(\beta)x^2g(x)$ are DNA reverses of each other.
		
	\end{remark}
	\section{Conclusion}
	
	In this study, the algebraic structure of skew cyclic codes with some specific properties that lead to construction of reversible DNA codes is studied. Here,  in order to establish a matching to $8$-mers, 
	a specific ring is considered and codes over this ring are studied. It is also observed that  the skew property of the polynomial ring serves better than the case over commutative rings. Future studies over more general $k$-mers $(k>8)$ and the dual codes over these rings as DNA codes are still interesting problems to be considered.

	\textbf{Acknowledgment:}
	The authors wish to express their thanks to the anonymous referees  whose comments improved the presentation of the paper.

	\medskip
	Received xxxx 20xx; revised xxxx 20xx.
	\medskip

\begin{thebibliography}{99}
		
		
		
		
		
		\bibitem{tehergf4}
		\newblock T. Abulraub, A. Ghrayeb and X. Nian Zeng,
		\newblock \emph{Construction of cyclic codes over $GF(4)$ for
			DNA computing},
		\emph J. Franklin Inst.,  \textbf{343} (2006),  448--457.
		
		
		
		\bibitem{adleman94}
		\newblock L. Adleman,
		\newblock \emph{Molecular computation of solutions to combinatorial problems},
		\newblock  Science, New Series, \textbf{266} (1994), 1021--1024.
		
		
		
		\bibitem{agul} [10.1007/s10623-015-0100-8]
		\newblock A. Bayram, E.S. Oztas and I. Siap,
		\newblock \emph{Codes over $F_4 + v F_4$ and some DNA applications},
		\newblock Designs, Codes and Cryptography, (2015), doi:\ 10.1007/s10623-015-0100-8.
		
		\bibitem {skew cyclic}
		\newblock D. Boucher, W. Geiselmann and F. Ulmer,
		\newblock \emph{Skew cyclic codes},
		\newblock  Appl. Algebra Eng. Comm., \textbf{18} (2007), 379--389.
		
		
		
		\bibitem {galois}
		\newblock D. Boucher, P. Sol\'e and F. Ulmer,
		\newblock\emph{  Skew constacyclic codes over Galois rings},
		\newblock  Adv. Math. Commun., \textbf{2} (2008), 273--292.
		
		
		
		
		
		
		\bibitem{alger}
		\newblock F. Gursoy, E. S. Oztas, I. Siap, 
		\newblock \emph{Reversible DNA Codes using Skew Polynomial Rings}, \newblock International Conference on Coding and Cryptography ICCC 2015, Proceedings.
		
		\bibitem {fq+vfq}
		\newblock F. Gursoy, I. Siap and B. Yildiz,
		\newblock \emph{Construction of skew cyclic codes over  $F_q + vF_q$},
		\newblock Adv. Math. Commun., \textbf{44} (2014), 313--322.
		
		
		
		\bibitem{Jacobson}
		\newblock N.  Jacobson,
		\newblock ``Finite-Dimensional Division Algebras over Fields,''
		\newblock Springer, Berlin, 1996.
		
		
		
		
		\bibitem {jitman}
		\newblock S. Jitman, S. Ling and P. Udomkavanich,
		\newblock \emph{ Skew constacyclic codes over finite chain rings,}
		\newblock  Adv. Math. Commun., \textbf{6} (2012), 29--63.
		
		
		\bibitem{alper} 
		\newblock J. Lichtenberg, A. Yilmaz, J. Welch, K. Kurz, X. Liang, F. Drews, K. Ecker, S. Lee, M. Geisler, E. Grotewold and L. Welch,
		\newblock \emph{The word landscape of the non-coding segments of the Arabidopsis thaliana genome},
		\newblock BMC Genomics, \textbf{10} (2009), 463.
		
		
		
		
		
		
		
		
		\bibitem {I.siap}
		\newblock I. Siap, T. Abualrub, N. Aydin and P. Seneviratne,
		\newblock\emph{Skew cyclic codes of arbitrary length},
		\newblock Int. J.Inform. Coding Theory, \textbf{2} (2011), 10--20.
		
		
		
		\bibitem{siaptaherr}
		\newblock I. Siap, T. Abulraub and A. Ghrayeb,
		\newblock \emph{Cyclic DNA codes over the ring $F_2[u]/(u^2-1)$ based on the deletion distance},
		\newblock J. Franklin Inst., \textbf{346} (2009), 731--740.	
		
		
		\bibitem{ise}
		\newblock E. S. Oztas and I. Siap,
		\newblock \emph{Lifted polynomials over $F_{16}$ and their applications to DNA Codes},
		\newblock Filomat, \textbf{27} (2013), 459--466.
		
		
		
		\bibitem{yao}
		\newblock T. Yao, M. Shi and P. Sol\'e,
		\newblock \emph{Skew cyclic codes over $F_q + uF_q + vF_q + uvF_q$},
		\newblock J. Algebra Comb. Discrete Appl., \textbf{2} (2015), 163--168.
		
		
		
		
		\bibitem{yildizsiap}
		\newblock B. Yildiz and I. Siap,
		\newblock \emph{Cyclic codes over $F_2[u]/(u^4-1)$ and applications to DNA codes},
		\newblock Comput. Math. Appl., \textbf{63}  (2012), 1169--1176.
		
		
		
		
	\end{thebibliography}
\end{document}